\newtheorem{Theorem}{Theorem}[section]
\newtheorem{Definition}[Theorem]{Definition}
\newtheorem{Proposition}[Theorem]{Proposition}
\newtheorem{Lemma}[Theorem]{Lemma}
\newtheorem{Corollary}[Theorem]{Corollary}
\theoremstyle{remark}
\newtheorem{Example}[Theorem]{Example}
\def\ovr{\overline}
\def\om{\omega}
\def\Om{\Omega}
\def\al{\alpha}
\def\ve{\varepsilon}
\def\lm{\lambda}
\def\sbs{\subset}
\def\lim{\operatorname{lim}}
\def\be{\begin{enumerate}}
\def\ee{\end{enumerate}}
\def\bT{\begin{Theorem}}
\def\eT{\end{Theorem}}
\def\bP{\begin{Proposition}}
\def\eP{\end{Proposition}}
\def\bPr{\begin{proof}}
\def\ePr{\end{proof}}
\def\bD{\begin{Definition}}
\def\eD{\end{Definition}}
\def\bE{\begin{Example}}
\def\eE{\end{Example}}
\def\bL{\begin{Lemma}}
\def\eL{\end{Lemma}}
\def\bC{\begin{Corollary}}
\def\eC{\end{Corollary}}
\begin{document}
\title{Poletsky-Stessin-Hardy spaces in the plane}
\author{Muhammed Al\.{i} Alan}
\address[Muhammed Al\.{i} Alan]{Syracuse University, Syracuse, NY,
13244 USA} \email{malan@syr.edu}
\author{N\.{i}hat G\"{o}khan G\"{o}\u{g}\"{u}\c{s}}
\subjclass[2010]{ Primary: 30H10, 30J99}
\address[N\.{i}hat G\"{o}khan G\"{o}\u{g}\"{u}\c{s}]{ Sabanci University,  Tuzla , Istanbul 34956 TURKEY}
\email{nggogus@sabanciuniv.edu}
\begin{abstract}
In this paper we give two complete characterizations of the
Poletsky- Stessin- Hardy spaces in the complex plane: First in terms
of their boundary values as a weighted subclass of the usual $L^p$
class with respect to the arclength measure on the boundary. Second
we completely describe functions in these spaces by having a
harmonic majorant with a certain growth condition and we prove some
basic results about these spaces. In particular, we prove
approximation results in such spaces and extend the classical result
of Beurling which describes the invariant subspaces of the shift
operator. Additionally we provide non-trivial examples.
\end{abstract}
\maketitle

\section{Introduction}
The theory of Hardy Spaces started with works of G. H. Hardy, J. E.
Littlewood in 1920s. By works of them and I. I. Privalov, F. and
M. Riesz, V. Smirnov and G. Szeg\"{o} the  theory was developed. It
is now an important branch of function theory still continuing to
attract many people in the common field of operator theory and
complex analysis. Their work was mainly in the unit disk of
$\mathbb{C}$. Later Hardy space theory was extended to more general
classes of domains such as the ball of ${\mathbb{C}}^n$
\cite{Rudin2}, the polydisk \cite{Rudin3}, multiply connected
domains in $\mathbb{C}$, Smirnov domains \cite{Duren}, pseudoconvex
domains with $C^2$ boundaries \cite{EaFefRos}, \cite{Ste72}.

This paper arose from the substantial development of function theory
in the work of E. A. Poletsky and M. Stessin \cite{PolSte08}. Using
the beautiful construction of Demailly-Monge-Amp\'{e}re measures
\cite{Dem87} of J. P. Demailly they unified those theories so that
one does not need to give a separate definition for all different
kinds of domains. Of course what those domains have in common is
that they are all hyperconvex.

We restrict our prime focus to the theory of Poletsky-Stessin-Hardy
spaces in the complex plane. We mention that in complex analysis of
one variable a domain is hyperconvex means exactly the same as that
the domain is regular with respect to the Dirichlet problem. This
means that the new theory of Hardy spaces applies to a wide class of
domains. Besides in one variable the theory is much transparent and
accessible because of the abundance of the tools from potential
theory. Even the unit disk accommodates a rich theory which is not
yet studied extensively. In this paper for example we provide the
first non-trivial examples of Poletsky-Stessin-Hardy spaces.

It turns out that Poletsky-Stessin-Hardy
spaces in the plane can be completely characterized by their boundary values. These spaces are
isometrically isomorphic to a subclass of a weighted $L^p$ space with respect to a positive
measure on the boundary (Theorem \ref{T:CharacHardyClassesBoundaryVal}). This important
property leads to very fruitful discussions and allows to extend the classical results of the theory.

Let us briefly introduce the content of the paper. Following \cite{PolSte08}
we define Poletsky-Stessin-Hardy spaces in this paper on a
regular domain $G$ in $\mathbb{C}$ as the set of all analytic functions on $G$ such that they satisfy the
following integral growth condition:
\begin{eqnarray*}
 \sup_{c<0}\left (\int_{S_{c,u}}|f|^p\,d\mu _{c,u}\right )^{1/p}<\infty
\end{eqnarray*}
(for the details and definitions see section \ref{Sec:Body}). Here $u$ is an arbitrary subharmonic
exhausting function for $G$. If $G=\mathbb{D}$ and $u=\log |z|$, we obtain the classical $H^p$ space
on the disk.

Section \ref{Sec:Body} contains our main theorem (Theorem
\ref{T:CharacHardyClassesBoundaryVal}) which completely
characterizes these spaces by their boundary values and by the
existence of a harmonic majorant having a certain growth condition.
On one hand this result is an extension of the well-known
characterization of functions in the classical Hardy space. On the
other hand, the main theorem reveals the structure of the functions
in the Poletsky-Stessin-Hardy spaces. In particular we learn from
this theorem that the zeros of a function in this space can be
factored in such a way that the resulting function is still in the
same space. Moreover, we show that the disk algebra or the set of
all complex polynomials is dense in this space whenever the Laplace
of the exhaustion has finite mass on the unit disk.

In section \ref{Sec:InvSubspace} we give a complete characterization
of the closed invariant subspaces of the multiplication by the
identity function which can be viewed as the shift operator. This is
a well-known theorem of Beurling in the classical case. Our result
in this sense is a generalization of Beurling's theorem.

In section \ref{Sec:Examples} we provide the first non-trivial
examples of Poletsky-Stessin-Hardy spaces. Finally we conclude the
paper with some remarks.

\section{Hardy spaces in the plane}\label{Sec:Body}

A function $u\leq 0$ on a bounded open set $G\subset \mathbb{C}$ is called
an exhaustion on $G$ if the set
\[
B_{c,u}:=\{z\in G:u(z)< c\}
\]
is relatively compact in $G$ for any $c<0$.
It is known that there is a subharmonic exhaustion function on $G$ if and only if $G$ is regular.
If $u$ is an exhaustion and $c<0$ is a number, we set
\[
u_c:=\max\{u,c\},\,\,\,\,\,\, S_{c,u}:=\{z\in G:u(z)=c\}.
\]
Since $u_c$ is a continuous subharmonic function the measure $\Delta u_c$
is well-defined. We define
\begin{eqnarray*}
 \mu_{c,u}:=\Delta u_c-\chi_{G\backslash B_{c,u}}\Delta u,
\end{eqnarray*}
where $\chi_{\om}$ is the characteristic function of a set $\om\sbs G$.
Demailly \cite{Dem87} calls these measures as Monge-Amp\'{e}re measures. We shall call those measures
as Demailly-Monge-Amp\'{e}re measures.

If $u$ is a negative subharmonic exhaustion function on $G$,
then the Demailly-Lelong-Jensen formula takes the form
\begin{equation}
\int_{S_{c,u}}v\,d\mu _{c,u}=\int_{B_{c,u}}(v\Delta u-u\Delta
v)+c\int_{B_{c,u}}\Delta v,  \label{Eq:DemJenLel}
\end{equation}%
where $\mu _{c,u}$ is the Demailly measure which is supported in the level sets $%
S_{c,u}$ of $u$ and $v\in $ $sh(G)$. This formula was proved by
Demailly \cite{Dem87} for hyperconvex domains in $\mathbb{C}^n$. In
complex analysis of one variable a domain is hyperconvex means
exactly the same as that the domain is regular. Let us recall that by \cite{Dem87}
if $\int_{G}\Delta u<\infty $, then the
measures $\mu _{c,u}$ converge as $c\rightarrow 0$ weak-$\ast $ to a measure $%
\mu _{u}$ supported in the boundary $\partial G$.
\newline

Let $u \in sh(G)$ be an exhaustion function which is continuous with values in $\mathbb{R}\cup \{-\infty\}$.
Following \cite{PolSte08} we set
\[
\mathnormal{sh}_u(G):=\mathnormal{sh}_u:=\left\{v\in sh(G):v\geq
0,\,\sup _{c<0}\int_{S_{c,u}}v\,d\mu _{c,u}<\infty\right\},
\]%
and
\[
H^p_u(G):=H^p_u:=\left\{f\in hol(G):|f|^p\in
\mathnormal{sh}_u\right\}
\]
for every $p>0$. In hyperconvex domains in $\mathbb{C}^n$ using
only the pluricomplex Green function for the domain the class
of holomorphic Hardy spaces was first introduced in \cite{Al03}. These spaces were
independently introduced and extensively studied in \cite{PolSte08}. We write
\begin{eqnarray}
\|v\|_{u}:=\sup
_{c<0}\int_{S_{c,u}}v\,d\mu _{c,u}=\int_{G}(v\Delta u-u\Delta v)
\end{eqnarray}
for the norm of a nonnegative function $v\in sh (G)$ and set
\begin{eqnarray}
\|f\|_{u,p}:=\sup_{c<0}\left (\int_{S_{c,u}}|f|^p\,d\mu _{c,u}\right )^{1/p}
\end{eqnarray}
for the norm of a holomorphic function $f$ on $G$.
Let us write $\|f\|_{u}$ when $p=1$.
It is known in view of \cite[Theorem 4.1]{PolSte08} that $H^p_u$ is
a Banach space when $p\geq 1$.  It is clear that the
function $1$ belongs to $H^p_u$ if and only if the Demailly measure
$\mu_{u}$ has finite mass. If $G$ is a regular bounded domain in
$\mathbb{C}$ and $w\in G$, then we have the Green function
$v(z)=g_G(z,w)$ which is a subharmonic exhaustion function for $G$.

\par Let $G$ be a domain of class $C^2$. This means that there
exists a real function $\lm$ (which is so called the characterizing
function) defined in a neighborhood of $\ovr G$ so that
\begin{itemize}
\item[i.] $G=\{z\in
\mathbb{C}
:\lambda (z)<0\}$, and
\item[ii.] $|\nabla \lambda (z)|>0$ if $z\in\partial G$.
\end{itemize}
Let \[G_{\varepsilon }:=\{z\in
\mathbb{C}
:\lambda (z)<-\varepsilon \},\,\,\,\, \ve >0.\]We will denote by
$\nu$ the usual arclength measure on $\partial G$ normalized so that
$\nu(\partial G)=1$. Let $\nu_{\varepsilon }$ denote the normalized
arclength measure on $G_{\varepsilon }. $ The following theorem was
proved in \cite[Theorem 1]{Ste72} (see also the Corollary after
that and \cite[Chapter 10]{Duren}).

\bT\label{T:SteinSpace} Suppose $u$ is a real harmonic function in a
bounded $C^2$ domain $G$. Then the following properties are
equivalent:
\begin{itemize}
\item[i.] $\underset{\varepsilon >0}{\sup }\left( \int_{\partial G_{\varepsilon
}}|u(\zeta )|^{p}d\nu _{\varepsilon }(\zeta )\right)
^{1/p}<\infty,\,\, $ $1\leq p$.

\item[ii.] $u(z)=\int_{\partial G}\widetilde{u}(\zeta)P_G(z,\zeta)d\nu (\zeta )$, where $\widetilde{u}\in L^{p}(\partial G)$ if $p>1$;
when $p=1$, then $\widetilde{u}(\zeta)d\nu (\zeta )$ has to be
replaced by a finite Borel measure on $\partial G$.

\item[iii.] $|u(z)|^p$ has a harmonic majorant if $p<\infty$. When
$p=\infty$, then we assume that $u$ is bounded in $G$.
\end{itemize}

Also
\[\left\Vert \widetilde{u}\right\Vert _{p}\leq \underset{\varepsilon >0}{\sup }\left(
\int_{\partial G_{\varepsilon }}(u(\zeta ))^{p}d\nu _{\varepsilon
}(\zeta )\right) ^{1/p},\,\,\,\, \textnormal{if} \,\,\,\, p>1.\] \eT

The following potential theory result is quite useful (see for example
\cite[Theorem 4.5.4]{Ran}).
\bL\label{Lem:HarmMajorant}
Let $G$ be a bounded domain of $\mathbb{C}$
and $u$ be a subharmonic function on $G$ which is not identically $-\infty$.
\begin{itemize}
 \item[i.] If $u$ has a harmonic majorant on $G$, then it has a least harmonic majorant $h$
 and we have
 \[u(z)=h(z)-\frac{1}{2\pi}\int_G g_G(z,w)\Delta u(w), \,\,z\in G.\]
 \item[ii.] If $u$ has no majorant on $G$, then
 \[-\frac{1}{2\pi}\int_G g_G(z,w)\Delta u(w)=\infty, \,\,z\in G.\]
\end{itemize}
\eL

We present now a complete description of the functions in
Poletsky-Stessin-Hardy spaces by the existence of a harmonic
majorant.

\bT \label{T:CharacHardyClassesHarmonicMajorant} Let $G$ be a
bounded domain and $u$ be a subharmonic exhaustion function on $G$.
Let $p>0$. The following statements are equivalent:
\begin{itemize}
\item[i.] $f\in H^p_u(G)$.
\item[ii.] There exists a least harmonic function $h$ in $G$ which
belongs to the class $sh_u$ so that $|f|^p\leq h$ on $G$.
Furthermore,
\begin{eqnarray*}
\|f\|_{u,p}^p=\int _G h\Delta u=\|h\|_u.
\end{eqnarray*}
\end{itemize}
\eT

\begin{proof}
Using (\ref{Eq:DemJenLel}) and Poisson-Jensen formula we see that
\begin{eqnarray}
\|f\|_{u,p} ^p&=& \nonumber \int_{G}|f(z)|^p\Delta u(z)- \int_{G}
u(z) \Delta (|f(z)|^p)  \\ \nonumber &=& \int_{G}|f(z)|^p\Delta
u(z)- \int_{G} \left
[\int_{G}g_G(z,w)\Delta u(w) \right ] \Delta(|f(z)|^p) \\
\label{Eq:har} &=&   \int_{G} \left [
|f(z)|^p-\int_Gg_G(z,w)\Delta(|f(w)|^p)\right ] \Delta u(z)
\end{eqnarray} We let
\[h(z):=|f(z)|^p-\int_Gg_G(z,w)\Delta(|f(w)|^p).\]
From Lemma \ref{Lem:HarmMajorant} we see that the subharmonic function $|f|^p$ has a least harmonic
majorant if and only if $h(z)<\infty$ for $z\in G$ and
in this case $h$ is the least harmonic majorant. Note that by (\ref{Eq:har}) $\|f\|_{u,p}
<\infty$ if and only if $h\in sh_u$. Therefore i. and ii. are
equivalent.
\end{proof}

\bC Let $G$ be a bounded regular domain, $w\in G$, and
$u=g_{G}(z,w)$ be a Green function for $G$. A holomorphic function
$f$ in $G$ belongs to $H^p_u$ if and only if $|f|^p$ has a harmonic
majorant in $G$. \eC

\begin{proof}
Suppose $|f|^p$ has a harmonic majorant $h$. Then
\[\|f\|_{u,p}^p=\sup_{c<0}\int_{S_{c,u}}|f|^p\,d\mu _{c,u}\leq \sup_{c<0}\int_{B_{c,u}}h\,\Delta (g_{G}(z,w))=h(w)<\infty.\]
Conversely, suppose $f\in H^p_u(G)$. Since the Green functions
$-g_{B_{c,u}}(z,w)$ are increasing to $-g_{G}(z,w)$ as $c$ increases
to $0$, we have
\begin{eqnarray*}
|f(w)|^p &=& \sup_{c<0}
\left(\frac{1}{2\pi}\int_{S_{c,u}}|f|^p\,d\mu _{c,u}
-\frac{1}{2\pi}\int_{B_{c,u}}g_{B_{c,u}}(z,w)\,\Delta
(|f(z)|^p)\right) \nonumber \\ &=&
\|f\|_{u,p}^p-\frac{1}{2\pi}\int_{G}g_{G}(z,w)\,\Delta
(|f(z)|^p)<\infty.
\end{eqnarray*}
By Lemma \ref{Lem:HarmMajorant} $|f|^p$ has a harmonic majorant on
$G$.
\end{proof}

\par Let $G$ be a domain in $\mathbb{C}$ with $C^2$ boundary. Let
$u(z)=g_G(z,w)$ be the Green function and $\lm$ be a characterizing
function for $G$. We get the Hardy space $H^p_u$ in the sense of
Poletsky-Stessin and also we have the Hardy space $H^p_{\lm}$ in the
usual sense (for example see \cite{Duren} and \cite{Ste72}). 
So a holomorphic function $f$ on $G$ belongs to
$H^p_{\lm}$ if and only if
\[\underset{\varepsilon >0}{\sup }\left( \int_{\partial G_{\varepsilon
}}|f(\zeta )|^{p}d\nu _{\varepsilon }(\zeta )\right)
^{1/p}<\infty.\] Theorem \ref{T:SteinSpace} says that a holomorphic
function $f$ on a $C^2$ domain $G$ belongs to the class $H^p_{\lm}$
if and only if $|f|^p$ has a harmonic majorant. Given these facts we
arrive at the following:

\bC\label{C:EquivalenceHardyStein} If $G$ is a domain in
$\mathbb{C}$ with $C^2$ boundary and $u(z)=g(z,w)$ is a Green
function for $G$, then $H^p_{\lm}=H^p_u$ for $p\geq 1$.
\eC

 Let $G$ be a bounded domain in $\mathbb{C}$ with $C^2$
boundary. Following Stein for each $\zeta\in \partial G$, we denote
by $\eta _{\zeta}$ the unit outward normal at $\zeta$. For each
$\al>0$ we define an approach region $\Lambda _{\alpha }(\zeta )$
with vertex $\zeta$ by the equation
\[\Lambda _{\alpha }(\zeta ):=\{z\in G:|(z-\zeta )\cdot \ovr{\eta} _{\zeta}|<(1+\alpha )\delta _{\zeta
}(z),|z-\zeta |^{2}<\alpha \delta _{\zeta }(z)\}.\] Here $\delta
_{\zeta }(z)$ is the distance from $z$ to $\partial G$.  The
following observation is important:

\bT\cite[Theorem 10]{Ste72}\label{T:SteinBoundaryValues} Suppose $f$
belongs to $H^p_{\lm}$, $0<p<\infty$. Then $f$ has admissible (and
non-tangential) limits at almost every boundary point and
\[\int_{\partial G}\underset{z\in \Lambda _{\alpha }(\zeta )}{\sup
}|f(z
)|^{p}d\nu (\zeta )\leq C_{p,\alpha }\underset{\varepsilon >0}{\sup }%
\int_{\partial G_{\varepsilon }}|f(\zeta )|^{p}d\nu _{\varepsilon
}(\zeta ).\] \eT

We will denote the non-tangential limits of $f$ by $f^{\ast}$. The
following result shows that the harmonic function on $G$ with values
$|f^{\ast}|^p$ coincides with the least harmonic majorant of
$|f|^p$.

\begin{Theorem}\label{T:BoundaryValues} Let $G$
be a bounded domain with $C^2$ boundary. Let $u$ be a real-valued
harmonic function on $G$ so that
\[\underset{\varepsilon >0}{\sup }\left( \int_{\partial G_{\varepsilon
}}|u(\zeta )|^{p}d\nu _{\varepsilon }(\zeta )\right)
^{1/p}<\infty.\] If $p>1$, then the boundary values $\widetilde{u}$
coincides with the non-tangential boundary values $u^{\ast}$.
Moreover, if $h$ is the least harmonic majorant of $|u|^p$, then
\begin{eqnarray}\label{EqLeastHMajEqualsPoissonInt}
h(z)=\int_{\partial G}|u^{\ast}(\zeta)|^pP_G(z,\zeta)d\nu (\zeta )
\end{eqnarray} for every $z\in G$.
\end{Theorem}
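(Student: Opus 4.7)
The plan is to extract the Poisson representation from Theorem \ref{T:SteinSpace} and then compare two natural harmonic functions associated to $u$: the Poisson integral of $|u^{\ast}|^p$ and the least harmonic majorant $h$ of $|u|^p$.

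First I would prove $u^{\ast}=\widetilde{u}$ almost everywhere. Since $u$ satisfies condition (i) of Theorem \ref{T:SteinSpace} with $p>1$, condition (ii) gives
\[
u(z)=\int_{\partial G}\widetilde{u}(\zeta)P_G(z,\zeta)\,d\nu(\zeta),\qquad \widetilde{u}\in L^p(\partial G).
\]
The classical Fatou theorem for Poisson integrals on $C^2$ domains (the harmonic analogue of Theorem \ref{T:SteinBoundaryValues}) says that the Poisson integral of an $L^p$ boundary function has admissible limits almost everywhere equal to that function; hence $u^{\ast}=\widetilde{u}$ a.e.

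Next I would build a candidate for the least harmonic majorant. Define
\[
H(z):=\int_{\partial G}|u^{\ast}(\zeta)|^p P_G(z,\zeta)\,d\nu(\zeta).
\]
Since $|u^{\ast}|^p=|\widetilde{u}|^p\in L^1(\partial G)$, $H$ is harmonic on $G$. Because $P_G(z,\cdot)\,d\nu$ is a probability measure and $t\mapsto|t|^p$ is convex for $p\geq 1$, Jensen's inequality applied to the Poisson representation of $u$ yields
\[
|u(z)|^p=\Bigl|\int \widetilde{u}\,P_G(z,\cdot)\,d\nu\Bigr|^p\leq \int|\widetilde{u}|^p\,P_G(z,\cdot)\,d\nu=H(z).
\]
Thus $H$ is a harmonic majorant of $|u|^p$, and the least harmonic majorant $h$ (which exists by Theorem \ref{T:SteinSpace}) satisfies $h\leq H$.

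The main obstacle is the reverse inequality $H\leq h$, which forces equality. For this I would argue as follows. The nonnegative harmonic function $h$ is dominated by $H$, and $H$ satisfies condition (i) of Theorem \ref{T:SteinSpace} with exponent $1$ (being the Poisson integral of an $L^1$ function), so $h$ does too. Hence $h$ admits a Herglotz-type representation $h(z)=\int P_G(z,\cdot)\,d\mu_h$ for a finite positive Borel measure $\mu_h$ on $\partial G$ whose absolutely continuous part has density equal to the non-tangential limit $h^{\ast}$ (which exists a.e.\ by Fatou for positive harmonic functions). Passing to non-tangential limits in the pointwise inequality $h\geq |u|^p$ gives $h^{\ast}\geq|u^{\ast}|^p$ almost everywhere, so
\[
h(z)=\int P_G(z,\cdot)\,d\mu_h\geq \int h^{\ast}(\zeta)P_G(z,\zeta)\,d\nu(\zeta)\geq \int|u^{\ast}(\zeta)|^p P_G(z,\zeta)\,d\nu(\zeta)=H(z).
\]
Combined with $h\leq H$ this gives $h=H$, which is \eqref{EqLeastHMajEqualsPoissonInt}.

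The delicate point is the third step: I am relying on two standard but non-trivial boundary facts for $C^2$ domains (Fatou-type existence of non-tangential limits for positive harmonic functions, and the Herglotz decomposition as a Poisson integral of a measure plus the identification of its absolutely continuous density with $h^{\ast}$). If the ambient framework already supplies these through Theorem \ref{T:SteinSpace}(ii) applied to $h$ with $p=1$, the argument is complete; otherwise, one would cite the corresponding results from Stein \cite{Ste72} or Duren \cite{Duren}, which is the route I would take.
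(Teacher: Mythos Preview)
Your argument is correct but follows a different route from the paper. The paper proves \eqref{EqLeastHMajEqualsPoissonInt} by an approximation argument: it writes the least harmonic majorant $h_\varepsilon$ of $|u|^p$ on the subdomain $G_\varepsilon$ as the Poisson integral of $|u|^p$ over $\partial G_\varepsilon$, pulls this back to $\partial G$ via the normal projection $\pi_\varepsilon$, and then lets $\varepsilon\to 0$, using that the transplanted boundary data $\widetilde u_\varepsilon$ converge to $\widetilde u$ in $L^p(d\nu)$ while the Poisson kernels and Jacobians converge uniformly. You instead run a direct sandwich: Jensen's inequality on the Poisson representation of $u$ gives $|u|^p\le H$, hence $h\le H$; and the Herglotz representation of $h$ together with Fatou's theorem yields $h\ge \int h^\ast P_G\,d\nu\ge H$. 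Your approach is cleaner and avoids the analytic bookkeeping with $\pi_\varepsilon$ and Jacobians, at the cost of invoking two boundary facts (Fatou limits for nonnegative harmonic functions and the identification of the absolutely continuous part of the representing measure with $h^\ast$) that the paper's limiting argument does not explicitly need. Both are available for $C^2$ domains in the references you cite, so the trade-off is purely one of style. One small point worth making explicit in your write-up: when you pass to non-tangential limits in $h\ge|u|^p$, you are using that $h^\ast$ and $u^\ast$ exist simultaneously on a set of full $\nu$-measure; this is immediate from the separate a.e.\ existence statements, but should be said.
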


\begin{proof} From Theorem \ref{T:SteinSpace}
\[u(z)=\int_{\partial G}\widetilde{u}(\zeta)P_G(z,\zeta)d\nu (\zeta )\]
on $G$. Then the classical Perron-Wiener-Brelot method (see
\cite[Section 2]{Helms}) implies that the non-tangential limits
$u^{\ast}$ of $u$ converges for $\nu$-almost every $\zeta$ in
$\partial G$ to $\widetilde{u}(\zeta)$ (see also \cite[Theorem
4]{Ste72}). In fact non-tangential limit of $u$ exists at every
boundary point which is in the Lebesgue set of $\widetilde{u}$. Let
$G_{\ve}$ be approximating regions in $G$ and $\pi_{\ve}:\partial
G_{\ve}\to \partial G$ be the normal projection. Define
\[\widetilde{u}_{\ve}(\zeta):=u(\pi_{\ve}^{-1}(\zeta)), \,\,\zeta\in\partial G.\]
From the construction (see the proof of Theorem 1 in \cite{Ste72})
these functions converge in $L^p(d\nu)$ to $\widetilde{u}$ as
$\ve\to 0$. Let $h_{\ve}$ be the least harmonic majorant of $|u|^p$
on $G_{\ve}$. By the Poisson-Jensen formula
\begin{eqnarray}
h_{\ve}(z) &=& \label{Eq:LeastHarmMaj}\int_{\partial G_{\ve}}
|u(s)|^pP_{G_{\ve}}(z,s)\,d\nu_{\ve}(s) \\ \nonumber &=&
\int_{\partial G}
|\widetilde{u}_{\ve}(\zeta)|^pP_{G}(z,\pi_{\ve}^{-1}(\zeta)){\mathcal{J}}_{\ve}(\pi_{\ve}^{-1}(\zeta))\,d\nu(\zeta),
\end{eqnarray}
where $z\in G$ and ${\mathcal{J}}_{\ve}$ is the Jacobian for
$\pi_{\ve}$. Fix a point $z_0\in G$. Since $G$ has $C^2$ boundary,
the Poisson kernels $P_{G}(z_0,\pi_{\ve}^{-1}(\zeta))$ converge
uniformly in $\zeta$ to $P_{G}(z_0,\zeta)$. Also the Jacobians
${\mathcal{J}}_{\ve}(\pi_{\ve}^{-1}(\zeta))$ converge uniformly in
$\zeta$ to $1$ as $\ve\to 0$. Using then the fact that
$\widetilde{u}_{\ve}$ converge to $\widetilde{u}$ in $L^p(d\nu)$ we
get that the last integrals in (\ref{Eq:LeastHarmMaj}) converge to
the integral
\[\int_{\partial G}
|\widetilde{u}(\zeta)|^pP_{G}(z,\zeta)\,d\nu(\zeta).\] On the other
hand, $h_{\ve}(z)$ converge to $h(z)$ for every $z\in G$. This
proves the equality in (\ref{EqLeastHMajEqualsPoissonInt}).
\end{proof}

Theorem \ref{T:BoundaryValues} can easily be adopted to get the
following Corollary:

\begin{Corollary}\label{Cor:HarmMajRecoveredBndryValues}
Let $G$ be a bounded domain with $C^2$ boundary. Let $f\in
H^p_{\lm}$, $p>1$. If $h$ is the least harmonic majorant of $|f|^p$
and $f^{\ast}$ denotes the non-tangential limits of $f$, then
\begin{eqnarray}\label{Eq:LeastHMajEqualsPoissonIntHolCase}
h(z)=\int_{\partial G}|f^{\ast}(\zeta)|^pP_G(z,\zeta)d\nu (\zeta )
\end{eqnarray} for every $z\in G$.
\end{Corollary}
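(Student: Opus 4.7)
My plan is to mimic the proof of Theorem \ref{T:BoundaryValues}, replacing the harmonic function $u$ by the subharmonic function $|f|^p$ and using Theorem \ref{T:SteinBoundaryValues} as the substitute ingredient that produces both the non-tangential boundary values and an $L^1$-dominating function. Fix $z\in G$. For each small $\varepsilon>0$, since $f$ is holomorphic on a neighborhood of $\overline{G_\varepsilon}$, the function $|f|^p$ is continuous and subharmonic up to the boundary of $G_\varepsilon$, so its least harmonic majorant $h_\varepsilon$ on $G_\varepsilon$ solves the Dirichlet problem with boundary data $|f|^p|_{\partial G_\varepsilon}$. Exactly as in formula (\ref{Eq:LeastHarmMaj}), I would change variables through the normal projection $\pi_\varepsilon$ and write
$$h_\varepsilon(z)=\int_{\partial G}\bigl|f(\pi_\varepsilon^{-1}(\zeta))\bigr|^p P_{G_\varepsilon}(z,\pi_\varepsilon^{-1}(\zeta))\,\mathcal{J}_\varepsilon(\pi_\varepsilon^{-1}(\zeta))\,d\nu(\zeta).$$

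Next I would pass to $\varepsilon\to 0$ on both sides. On the left, $h_\varepsilon \uparrow h$: indeed, $h|_{G_\varepsilon}$ is itself a harmonic majorant of $|f|^p$ on $G_\varepsilon$, so $h_\varepsilon\leq h|_{G_\varepsilon}$; conversely, as $\varepsilon$ decreases the domains expand and the $h_\varepsilon$ form an increasing family, so their pointwise limit is harmonic on $G$ and majorizes $|f|^p$, forcing it to coincide with the least such majorant $h$. On the right-hand side, the Poisson kernels $P_{G_\varepsilon}(z,\pi_\varepsilon^{-1}(\zeta))$ and the Jacobians $\mathcal{J}_\varepsilon(\pi_\varepsilon^{-1}(\zeta))$ converge uniformly in $\zeta$ to $P_G(z,\zeta)$ and $1$ respectively, since $G$ has $C^2$ boundary. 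By Theorem \ref{T:SteinBoundaryValues}, $f$ has non-tangential limits $\nu$-almost everywhere, and because the inward normal $\pi_\varepsilon^{-1}(\zeta)$ belongs to every admissible approach region $\Lambda_\alpha(\zeta)$ once $\varepsilon$ is small enough, we have $|f(\pi_\varepsilon^{-1}(\zeta))|^p\to |f^{\ast}(\zeta)|^p$ for $\nu$-a.e. $\zeta$.

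The main technical point, and the only place the argument differs substantively from the proof of Theorem \ref{T:BoundaryValues}, is justifying dominated convergence in the boundary integral: we no longer have the $L^p$-convergence $\widetilde{u}_\varepsilon\to\widetilde{u}$ that came for free in Stein's theorem. This is precisely what the admissible maximal function estimate in Theorem \ref{T:SteinBoundaryValues} supplies: the function $\zeta \mapsto \sup_{z\in\Lambda_\alpha(\zeta)}|f(z)|^p$ lies in $L^1(\partial G, d\nu)$ and bounds $|f(\pi_\varepsilon^{-1}(\zeta))|^p$ uniformly in $\varepsilon$. Dominated convergence then gives that the right-hand side tends to $\int_{\partial G}|f^{\ast}(\zeta)|^p P_G(z,\zeta)\,d\nu(\zeta)$, and matching this against $h(z)$ yields (\ref{Eq:LeastHMajEqualsPoissonIntHolCase}).
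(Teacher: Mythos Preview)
Your argument is correct and is precisely the adaptation the paper has in mind: the paper gives no separate proof, only the remark that Theorem \ref{T:BoundaryValues} ``can easily be adopted,'' and your write-up carries this out faithfully. The one genuine new ingredient you correctly identify and supply---replacing the $L^p$-convergence of $\widetilde{u}_\varepsilon$ by dominated convergence via the admissible maximal function from Theorem \ref{T:SteinBoundaryValues}---is exactly what is needed to handle the subharmonic (rather than harmonic) boundary data $|f|^p$.
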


We will borrow a result from \cite{PolSte08}. \bP\cite[Corollary
3.2]{PolSte08} \label{P:HardyClassesContainedGreenExh} Let $u$ be a
continuous subharmonic exhaustion function on a bounded regular
domain in $G$ and let $v(z)=g(z,w)$ be the Green function. Then
$sh^p_u(G)\sbs sh^p_v(G)$ and there is a constant $c$ such that
$\|\varphi\|_{v}\leq c\|\varphi\|_{u}$ for every nonnegative
subharmonic function $\varphi$ on $G$. In particular, $H^p_u(G)\sbs
H^p_v(G)$. \eP

Our main theorem says that Poletsky-Stessin-Hardy spaces are
isometric to a subspace of some $L^p(\widetilde{\mu_u})$. Hence
these spaces are a generalization of the classical theory of Hardy
spaces to a weighted theory. \bT
\label{T:CharacHardyClassesBoundaryVal} Let $G$ be a bounded domain
with $C^2$ boundary and $u$ be a subharmonic exhaustion function on
$G$. Let $p>1$. The following statements are equivalent:
\begin{itemize}
\item[i.] $f\in H^p_u(G)$.
\item[ii.] $f\in H^p_{\lm}(G)$ and \[\int_{\partial G} |f^{\ast
}(\zeta )|^pV(\zeta )d\zeta <\infty ,\] where
\begin{eqnarray}\label{Eq:DefnV} V(\zeta):=\int_{G}P_G(z,\zeta)\Delta u(z),\,\,\,\,
\zeta\in\partial G.\end{eqnarray}
\item[iii.] $f\in H^p_{\lm}(G)$ and there exists a positive measure
$\widetilde{\mu_u}$ on $\partial G$ such that $|f^*|\in
L^p(\widetilde{\mu_u})$. Moreover, if $E$ is any Borel subset of
$\partial G$ with measure $\nu(E)=0$, then $\widetilde{\mu_u}(E)=0$
and we have the equality
\begin{eqnarray} \int_{\partial G}\varphi\,d\widetilde{\mu_u}=\int_{G}P_G(\varphi)\Delta u\end{eqnarray}
for every $\varphi\in L^1(\partial G)$.
\end{itemize}
In addition, if $f\in H^p_u(G)$, then
$\|f\|_{u,p}=\|f^{\ast}\|_{L^p(\widetilde{\mu_u})}$. \eT

\begin{proof}
Let $V(\zeta)$ be the non-negative function defined in
(\ref{Eq:DefnV}). We set
\begin{eqnarray*}
d\widetilde{\mu_u}(\zeta):=V(\zeta)d\nu(\zeta).
\end{eqnarray*}
Let
\begin{eqnarray*}
H(z):=\int_{\partial G} |f^{\ast }(\zeta
)|^pP_G(z,\zeta)d\nu(\zeta),\,\,\,\,z\in G,
\end{eqnarray*}
and let $h$ denote the least harmonic majorant of $|f|^p$ whenever
exists. We take $H\equiv \infty$ when
$\|f^{\ast}\|_{L^p(\widetilde{\mu_u})}=\infty$ and $h\equiv \infty$
when $\|f\|_{u,p}=\infty$. Using Fubini's Theorem gives
\begin{eqnarray}
\|f^{\ast}\|_{L^p(\widetilde{\mu_u})}^p &=& \int_{\partial G}
|f^{\ast }(\zeta )|^pV(\zeta )d\nu(\zeta) \nonumber \\ &=&
\int_{\partial G} |f^{\ast }(\zeta )|^p\left [ \int_G
P_G(z,\zeta)\Delta u(z)\right ]d\nu(\zeta) \nonumber \\ &=& \int_{G}
H(z) \Delta u(z). \label{Eq:BndryHol}
\end{eqnarray}

From Theorem \ref{T:CharacHardyClassesHarmonicMajorant} we had
\begin{eqnarray}
 \|f\|_{u,p}^p  = \int_{G} h(z)\Delta u(z) \label{Eq:hol}.
\end{eqnarray}

If $f\in H^p_u(G)$, then $f\in H^p_{\lm}$ by Proposition
\ref{P:HardyClassesContainedGreenExh} and Corollary
\ref{C:EquivalenceHardyStein}. Using Theorem \ref{T:BoundaryValues}
we see that there is equality of (\ref{Eq:BndryHol}) and
(\ref{Eq:hol}). Hence we have $ \|f\|_{u,p} =
\|f^{\ast}\|_{L^p(\widetilde{\mu_u})}$ and i. implies ii.
Conversely, if ii. holds, then in view of Corollary
\ref{Cor:HarmMajRecoveredBndryValues} the least harmonic majorant
$h$ of $|f|^p$ is finite, and $H=h$ on $G$. Again
(\ref{Eq:BndryHol}) and (\ref{Eq:hol}) coincide, hence $f\in H^p_u$.
Thus i. and ii. are equivalent.

Clearly the measure $\widetilde{\mu_u}$ satisfies the conditions in
iii. From the definition of $\widetilde{\mu_u}$ we have
\begin{eqnarray*}
\int _{\partial G}|f^{\ast }(\zeta )|^p\,d\widetilde{\mu_u}(\zeta) =
\int_{\partial G} |f^{\ast }(\zeta )|^pV(\zeta )d\zeta.
\end{eqnarray*}
We see that $|f^{\ast }|\in L^p(\widetilde{\mu_u})$ if and only if
ii. holds. This proves that ii. and iii. are equivalent and
completes the proof of the theorem.
\end{proof}
In particular, when the function $u$ is chosen to be $\log|z|$ on
the domain $G=\mathbb{D}$, the weight function $V(\zeta)$ becomes
identically equal to $1$ giving the classical Hardy space
$H^p(\mathbb{D})$. One of the conclusions of Theorem
\ref{T:CharacHardyClassesBoundaryVal} is the following analog of the
Riesz factorization result. \bT\label{Th:InnerOuterFactorization}
Let $f=gh$ for some $g\in H^{\infty}(G)$ and $h\in hol(G)$. Then
$f\in H^p_u$ if $h\in H^p_u$, $p>0$. Suppose $G$ has $C^2$ boundary,
$p>1$ and $|g^*(\zeta)|$ is $1$ for $\nu$-almost every $\zeta$ on
$\partial G$. Then $h\in H^p_u$ if and only if   $f\in H^p_u$. In
fact, $\|f\|_{p,u}=\|h\|_{p,u}$ in this case. \eT

Our next aim is to show that Poletsky-Stessin-Hardy spaces are
carried the same by conformal maps between bounded domains.

\bT Let $U$ and $G$ be bounded regular domains in $\mathbb{C}$ and
$\varphi :\ovr{G}\to\ovr{U}$ be a $C^1$ function with $C^1$ inverse
which is conformal from $G$ onto $U$. Let $u$ be a subharmonic
exhaustion function on $U$. Then
\begin{eqnarray*}
f\in H^p_u(U)  \,\,\,\,\,\,\textnormal{if and only if}\,\,\,\,\,\, f\circ \varphi\in H^p_{u\circ\varphi}(G).
\end{eqnarray*}
\eT
\begin{proof}
Since $\varphi$ is conformal, there are constants $m$ and $M$ so
that $0<m\leq |\varphi'|^2\leq M$ on $G$ and since $\varphi$ is
continuous on $\ovr G$ the function $u\circ \varphi$ is also a
subharmonic exhaustion on $G$. Also $m\Delta u\leq\Delta (u\circ
\varphi)=\Delta u|\varphi'|^2\leq M\Delta u$. Let $f\in H^p_u(U)$.
Considering Theorem \ref{T:CharacHardyClassesHarmonicMajorant} this
means that there exists $h\in har(U)$ so that $\int h\Delta
u<\infty$ and $|f|^p\leq h$ on $U$. In this case $h\circ\varphi\in
har(G)$, $\int (h\circ\varphi)\Delta (u\circ\varphi)<\infty$, and
$|f\circ\varphi|^p\leq h\circ\varphi$ on $G$. Using Theorem
\ref{T:CharacHardyClassesBoundaryVal} we see that $ f\circ
\varphi\in H^p_{u\circ\varphi}$. We can apply the same argument for
$\varphi ^{-1}$ to complete the proof.
\end{proof}

\par The following observation is elementary and useful.
\bL \label{Lem:HarmExt} Let $G$ be a regular bounded domain in $\mathbb{C}$ and $u\in
C(\ovr G)\cap sh (G)$ be an exhaustion. Then
\begin{eqnarray*}
\int_{S_{c,u}}\varphi (\zeta )d\mu _{c,u}(\zeta )
=\int_{B_{c,u}}P_{B_{c,u}}\varphi\Delta u(z)\end{eqnarray*} for
every $\varphi\in C(\ovr G)$. \eL

\begin{proof}
 Let $\varphi$ be a continuous function on $S_{c,u}$ and let $h(z)$ be the harmonic function in $B_{c,u}$ with boundary
values equal to $\varphi$. We know that $u-c$ is a subharmonic
exhaustion function for $B_{c,u}$. By equality (\ref{Eq:DemJenLel}) we have
\begin{eqnarray*}
\int_{S_{c,u}}\varphi (\zeta )d\mu _{c,u}(\zeta ) = \int_{B_{c,u}}h(z)\Delta u(z).
\end{eqnarray*}
\end{proof}

\par To obtain Fatou's type results we would like to compute the
Radon-Nikodym derivative of the Demailly measures with respect to
the usual arclength measure on the level sets. In the next result we
provide this. Suppose that $S_{c,u}$ is a rectifiable Jordan curve.
Let $\nu_c$ denote the arclength measure on $S_{c,u}$. Suppose that
the harmonic measure $d\om (z,\zeta)$ for $B_{c,u}$ can be expressed
as
\begin{eqnarray*}
d\om (z,\zeta)=P_{B_{c,u}}(z,\zeta)d\nu_c(\zeta)
\end{eqnarray*}
and define
\begin{eqnarray*}U_c(\zeta):=\int_{B_{c,u}}P_{B_{c,u}}(z,\zeta)\Delta
u(z),\,\,\,\,\zeta\in S_{c,u},
\end{eqnarray*} where $P_{B_{c,u}}(z,\zeta)$ denotes the Poisson kernel for
$B_{c,u}$. This is the case for example when $B_{c,u}$ is chord-arc
(see for example \cite{CaKeLa} and Theorem 1.14 therein) and this
happens for instance when $S_{c,u}$ is Lipschitz (see also \cite{GarMar}).

\bP\label{Prop:AbsContLebesgueMeas} Let $u$ be a subharmonic exhaustion function on a bounded
regular domain $G$ in $\mathbb{C}$. Suppose that $u$ is Lipschitz in
every compact subset of $G$. Then the measures $\nu_c$ and
$\mu_{c,u}$ are mutually absolutely continuous and $\mu
_{c,u}=U_{c}\nu _{c}$ with $U_c\in L^1(\nu_c)$. \eP

\begin{proof} Let $\varphi$ be a continuous function on $S_{c,u}$ and let $h(z)$ be the harmonic function in $B_{c,u}$ with boundary
values equal to $\varphi$. By equality (\ref{Eq:DemJenLel})
and Lemma \ref{Lem:HarmExt} we have
\begin{eqnarray*}
\int_{S_{c,u}}\varphi (\zeta )d\mu _{c,u}(\zeta )
&=&\int_{B_{c,u}}h(z)\Delta u(z) \\
&=&\int_{B_{c,u}}\left( \int_{S_{c,u}}\varphi (\zeta
)P_{B_{c,u}}(z,\zeta
)d\nu _{c}(\zeta )\right) \Delta u(z) \\
&=&\int_{S_{c,u}}\left( \int_{B_{c,u}}P_{B_{c,u}}(z,\zeta )\Delta
u(z)\right)
\varphi (\zeta )\,d\nu _{c}(\zeta ) \\
&=&\int_{S_{c,u}}\varphi (\zeta )U_{c}(\zeta )\,d\nu _{c}(\zeta ).
\end{eqnarray*}
Hence $\mu _{c,u}=U_{c}\nu _{c}$. Another observation using Fubini's theorem gives
\begin{eqnarray*}
\int_{S_{c,u}}U_c (\zeta )d\nu _{c}(\zeta )
&=&\int_{S_{c,u}}\left ( \int_{B_{c,u}}P_{B_{c,u}}(z,\zeta)\Delta
u(z)   \right )d\nu _{c}(\zeta ) \\
&=&\int_{B_{c,u}}\left( \int_{S_{c,u}}P_{B_{c,u}}(z,\zeta
)d\nu _{c}(\zeta )\right) \Delta u(z) \\
&=&\int_{B_{c,u}} \Delta u(z) =\|\mu_{c,u}\|<\infty.
\end{eqnarray*}
Thus $U_c\in L^1(\nu_c)$. This completes the proof.
\end{proof}

The next results are restatements from \cite{PolSte08} and they establish basic observations on the
classes of Hardy spaces.

\bP\cite[Corollary 3.2]{PolSte08}\label{P:OrderHardySpaces} Let $u$
and $v$ be continuous subharmonic exhaustion functions on $G$ and
let $K$ be a compact set in $G$ such that $bv(z) \leq u(z)$ for some
constant $b
> 0$ and all $z \in G \backslash K$ . Then $sh_v\sbs sh_u$ and
$\|\varphi\|_u \leq b \|\varphi\|_v$ for every $\varphi\in sh_v$.
\eP

The following result is basically contained in the proof of \cite[Thorem 3.6]{PolSte08} taking $n=1$.
\bP\label{P:PSHNormDominatesSupNorm}
Let $v$ be a continuous subharmonic exhaustion function on $G$, $K\sbs G$
be compact and $V\sbs\sbs G$ be an open set containing $K$. Suppose that there exists
a constant $s>0$ so that  $v(z)\leq sg_G(z,w)$ for every $w\in K$ and $z\in G\backslash K$.
Then
\begin{eqnarray*}
\varphi (w)\leq \frac{s}{2\pi}\|\varphi\|_v,\,\,\,\,w\in K
\end{eqnarray*}
for every nonnegative $\varphi\in sh(G)$.
\eP
\section{Invariant subspaces of the shift operator}\label{Sec:InvSubspace}
We reserve this section for general results about the
Poletsky-Stessin-Hardy spaces in the disk. In view of Theorem
\ref{Th:InnerOuterFactorization} and combining with the results of
the classical Hardy space theory every function $f\in
H^p_u(\mathbb{D})$ is of the form $f=gh$, where $g$ is an inner
function, $h\in H^p_u(\mathbb{D})$ and $h$ is not equal to zero
anywhere in $\mathbb{D}$. The following factorization can be
compared with \cite[Theorem 17.10]{Rudin}.
\bT\label{Th:2pDescription} Suppose $0<p<\infty$, $f\in
H^p_u(\mathbb{D})$, $f\not \equiv 0$, and $B$ is the Blaschke
product formed with the zeros of $f$. Then there is a zero-free
function $h\in H^2_u$ such that
\begin{eqnarray}\label{Eq:2pDescription}
f=Bh^{2/p}.
\end{eqnarray}
In particular, every $f\in H^1_u$ is a product
\begin{eqnarray}\label{Eq:12Description}
f=gh,
\end{eqnarray}
in which both factors are in $H^2_u$. \eT

\begin{proof} By Theorem \ref{Th:InnerOuterFactorization} $f/B\in H^p_u$. There exists
$\varphi\in hol(\mathbb{D})$ so that
\[f/B=e^{\varphi}.\]Put
\[h=e^{p\varphi/2}\] and this function satisfies (\ref{Eq:2pDescription}). Also
$h\in H^2_u$ by Theorem \ref{T:CharacHardyClassesBoundaryVal}. Now
to obtain (\ref{Eq:12Description}) write (\ref{Eq:2pDescription}) in
the form $f=(Bh)h$. We just note that $Bh\in H^2_u$ from Theorem
\ref{Th:InnerOuterFactorization}.
\end{proof}

In this section we extend the classical characterization of
invariant subspaces of the multiplication operator $M_z$ by $z$ to
the Poletsky-Stessin-Hardy spaces. Our proof basically follows the
proof of Beurling's Theorem in \cite{Rudin}. In the next theorem the
function $V$ is the one defined by (\ref{Eq:DefnV}) and
$H^2(\mathbb{D})$ is the classical Hardy space.
\bT\label{Th:BeurlingThm} Let $Y\not =\{0\}$ be a closed
$M_z$-invariant subspace of $H^2_u(\mathbb{D})$. Then there exists a
function $\varphi \in H^2_u$ so that $|\varphi^{\ast}
(\zeta)|^2V(\zeta)=1$ for almost every $\zeta\in\partial\mathbb{D}$
and $Y=\varphi H^2(\mathbb{D})$. \eT

\begin{proof} There is a smallest integer $k$ so that $Y$ contains a function of the form
\begin{eqnarray*}
 f(z)=\sum_{n=k}^{\infty}a_nz^n,\,\,\,\,a_k=1.
\end{eqnarray*}
Hence, $f\not\in zY$ and $zY$ is a proper closed subspace of $Y$. There exists then
a function $\varphi\in Y$ with $\|\varphi\|_{2,u}=1$ so that $\varphi\perp zY$. In particular, $\varphi\perp z^n\varphi$
for every $n\geq 1$ which means that
\begin{eqnarray*}
 \frac{1}{2\pi}\int_0^{2\pi} |\varphi ^{\ast}(e^{i\theta})|^2V(e^{i\theta})e^{in\theta}\,d\theta =0
\end{eqnarray*}
for every integer $n\not =0$. From the Fourier coefficients we see that the function
$|\varphi ^{\ast}|^2V=1$ almost everywhere on $\partial\mathbb{D}$. Since $Y$ is $M_z$-invariant,
all functions $\varphi g$, where $g$ is a poynomial, is in $Y$. Now note that
\begin{eqnarray}\label{Eq:H2NormEqualsMultNorm}
 \|\varphi g\|^2_{2,u}=\int _{\partial \mathbb{D}}|\varphi^{\ast}(\zeta)|^2|g(\zeta)|^2V(\zeta)\,d\nu(\zeta)=\|g\|^2_2
\end{eqnarray}
when $g$ is a polynomial. The polynomials are dense in $H^2$, and
since $Y$ is closed, it follows that $\varphi H^2\sbs Y$. We have to
prove that this inclusion is in fact an equality. First notice that
the set $\varphi H^2$ is a closed subspace of $Y$. To see this we
take $\varphi g_n\in \varphi H^2$ converging to a function $h\in Y$.
Considering the norm in $H^2_u$ this gives that the sequence
$\{g_n\}$ is Cauchy in $H^2$, hence the functions $g_n$ converge in
$H^2$ to a $g\in H^2$. By (\ref{Eq:H2NormEqualsMultNorm}) it follows
that $\varphi g_n$ converges to $\varphi g$ in $H^2_u$, thereofore,
$h=\varphi g$. Now let $\psi\in Y$ and $\psi\perp \varphi H^2$. We
need to show that $\psi =0$. If $\psi\perp \varphi H^2$, then
$\psi\perp z^n\varphi$ for $n=0,1,2,\ldots$. On the other hand
$z^n\psi\in zY$ for $n\geq 1$ and this shows that $z^n\psi\perp
\varphi$. Thus we see that all Fourier coefficients of
$\psi^{\ast}\ovr {\varphi^{\ast}}V$ are $0$, hence $\psi^{\ast}\ovr
{\varphi^{\ast}}V=0$ almost everywhere on $\partial \mathbb{D}$. The
conclusion from here is that $\psi^{\ast}=0$ almost everywhere on
$\partial \mathbb{D}$, hence $\psi=0$, and this proves the claim
that $Y=\varphi H^2(\mathbb{D})$.
\end{proof}
Following the Theorem above we will call a function $\varphi\in H^2_u$ a $u$-inner
function if $|\varphi^{\ast} (\zeta)|^2V(\zeta)$ equals $1$ for almost every $\zeta\in\partial\mathbb{D}$.
If, moreover, $\varphi (z)$ is zero-free, we will say that $\varphi$ is a singular $u$-inner function.
Since $H^2_u$ is itself a closed invariant subspace of $M_z$, we obtain the following result
from Theorem \ref{Th:BeurlingThm}.
\bC\label{Cor:PSHSpacesAreMultipleOfHardy}
Let $u$ be a continuous subharmonic exhaustion function on $\mathbb{D}$ so that
$H^2_u\not =\{0\}$. If $\psi\in H^2$ is an inner function, then there exists a $u$-inner function
$\varphi \in H^2_u$ so that $\psi H^2_u=\varphi H^2$ and these spaces are isometric. In particular, there exists a
$u$-inner function $\varphi \in H^2_u$ so that $H^2_u=\varphi H^2$ and these spaces are isometric.
\eC

Corollary \ref{Cor:PSHSpacesAreMultipleOfHardy} gives another characterization
of the Poletsky-Stessin-Hardy spaces. Combining with the statement of Theorem \ref{Th:2pDescription}
we extend this result to the case $p\not =2$.
\bT
Suppose $0<p<\infty$, $f\in H^p_u(\mathbb{D})$, $f\not \equiv 0$, and $B$ is the Blaschke product formed with the
zeros of $f$. Then there are zero-free $\varphi\in H^2_u$ and $h\in H^2$ so that $\varphi$ is $u$-inner and
\begin{eqnarray}
 f=B\varphi ^{2/p}h^{2/p}.
\end{eqnarray}
Moreover, $\|f\|_{p,u}=\|h^{2/p}\|_p$.
\eT


\section{Examples}\label{Sec:Examples}

In this section we provide non-trivial examples of
Poletsky-Stessin-Hardy spaces. One way of obtaining a negative
subharmonic exhaustion function $u$ on the unit disk is the
following. Take any subharmonic function $v$ in disk so that $v(z)$
is continuous as $z$ approaches the boundary. From the
Poisson-Jensen formula $v$ is the sum of a Poisson integral
$P_{\mathbb{D}}v$ and a Green integral $G_{\mathbb{D}}v$. Let
$u=G_{\mathbb{D}}v$. Then $u$ is a negative subharmonic exhaustion
function on $\mathbb{D}$. For any continuous function $\varphi$ on
$\partial \mathbb{D}$  we set
\[ P_{\mathbb{D}}\varphi (z):=\frac{1}{2\pi }\int P(r,\theta -t)\varphi
(e^{it})dt,\,\,\,\,z=re^{i\theta },
\]
the Poisson integral of $\varphi$ on $\mathbb{D}$.

\begin{Example}\label{Ex: ExhFuncDefn}
Let $\Delta :=\{z=x+iy\in \mathbb{C}:(x-1/2)^{2}+y^{2}<1/4\}$. Let $%
\varphi _m(z):=-(1-x)^m$ for any $z\in \mathbb{D}$. Then $\Delta
\varphi_m=m(1-m)(1-x)^{m-2}$. Hence $\varphi_m$ is subharmonic in
$\mathbb{D}$ when $0<m\leq 1$. Let
\[G_{\mathbb{D}}\varphi_m(z):=\frac{1}{2\pi}\int_{\mathbb{D}}\log\left | \frac{z-w}{1-z\ovr w}  \right |\,\Delta \varphi_m(w)\] for any
$z\in \mathbb{D}$ be the Green integral of $\varphi_m$. Let $\Om
:=\mathbb{D}\backslash \ovr{\Delta}$ and $\sigma _m$ be the
restriction of $\Delta \varphi_m$ to $\Delta$. Define
\[G_{\mathbb{D}}\sigma_m(z):=\frac{1}{2\pi}\int_{\mathbb{D}}\log\left | \frac{z-w}{1-z\ovr w}  \right |\,d \sigma_m(w),\,\,\,\,z\in\mathbb{D}.\]
We denote by $\widetilde{\varphi _{m}}$ the continuous function in
the boundary of $\Om$ with values equal to zero on $\partial
\mathbb{D}$ and equal to $\varphi _m$ on the boundary of $\Delta$.
Let us denote the harmonic function in $\Om$ with boundary values
equal to $\widetilde{\varphi _{m}}$ by $P_{\Omega
}(\widetilde{\varphi _{m}} )(z)$. We set
\[
v_{m}(z):=\left\{
\begin{array}{cc}
\varphi _{m}(z)\text{ } & \text{if \ }z\in \overline{\Delta }, \\
P_{\Omega }(\widetilde{\varphi _{m}} )(z) & \text{if \ }z\in \overline{\Omega }.%
\end{array}%
\right.
\] Then $v_m$ is continuous and subharmonic in $\mathbb{D}$.

One can calculate that
\begin{eqnarray*}\int_{\mathbb{D}}d\sigma_m &=& m(1-m)\int_{0}^{1}\int_{-\sqrt{x(1-x)}}^{\sqrt{x(1-x)}}(1-x)^{m-2}\,%
dydx \\ \nonumber &=& 2m(1-m)\int_{0}^{1}\sqrt{x}(1-x)^{m-3/2}
\,dx.\end{eqnarray*}Hence $\int_{\mathbb{D}}d\sigma_m<\infty$
precisely when $1/2<m\leq 1$.
\end{Example}

\emph{Throughout this section we will use the notation of Example
\ref{Ex: ExhFuncDefn}. We will take $0<m\leq 1$.}\newline

\begin{Proposition} \label{Pr:IneqExhFunc} The functions
$v_m$, $G_{\mathbb{D}}\varphi _{m}$ and $G_{\mathbb{D}}\sigma _{m}$
are continuous subharmonic exhaustions on $\mathbb{D}$ and we have
\[
\varphi _{m}(z)\leq G_{\mathbb{D}}\varphi _{m}(z)\leq
G_{\mathbb{D}}\sigma _{m}(z)
\]  and
\[\varphi _{m}(z)\leq v_{m}(z)\leq G_{\mathbb{D}}\sigma _{m}(z)
\] for every $z\in \mathbb{D}$.
\end{Proposition}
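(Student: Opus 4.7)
The plan is to show that each of the three functions is continuous, subharmonic, and nonpositive on $\mathbb{D}$, with vanishing limit on $\partial\mathbb{D}$ (which immediately yields the exhaustion property), and then to derive the two inequality chains using the splitting $\Delta\varphi_m=\sigma_m+\rho_m$, where $\rho_m:=\Delta\varphi_m|_\Omega$, together with the maximum principle.

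For $G_{\mathbb{D}}\varphi_m$, the Riesz/Poisson--Jensen decomposition gives $\varphi_m = P_{\mathbb{D}}\varphi_m + G_{\mathbb{D}}\varphi_m$. Since $\varphi_m(z)=-(1-x)^m$ is continuous on $\ovr{\mathbb{D}}$ with nonpositive boundary values, so is $P_{\mathbb{D}}\varphi_m$, and both have identical boundary values; hence $G_{\mathbb{D}}\varphi_m$ extends continuously to $\ovr{\mathbb{D}}$ and vanishes on $\partial\mathbb{D}$. It is subharmonic because $\Delta G_{\mathbb{D}}\varphi_m=\Delta\varphi_m\ge 0$, and nonpositive because the Green kernel is. For $G_{\mathbb{D}}\sigma_m$, the decomposition $G_{\mathbb{D}}\varphi_m = G_{\mathbb{D}}\sigma_m + G_{\mathbb{D}}\rho_m$ exhibits it as the sum of two nonpositive, finite Green potentials, so $G_{\mathbb{D}}\varphi_m\le G_{\mathbb{D}}\sigma_m\le 0$; sandwiching with $G_{\mathbb{D}}\varphi_m\to 0$ at $\partial\mathbb{D}$ forces $G_{\mathbb{D}}\sigma_m\to 0$ there. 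Continuity of $G_{\mathbb{D}}\sigma_m$ inside $\mathbb{D}$ follows from the fact that the density $m(1-m)(1-x)^{m-2}\chi_\Delta$ lies in $L^\infty_{loc}(\mathbb{D})$ (its singular point $1$ lies outside $\mathbb{D}$), so standard potential theory yields continuity.

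For $v_m$, continuity on $\ovr{\mathbb{D}}$ follows from matching of $\varphi_m$ and $P_\Omega(\widetilde{\varphi_m})$ on $\partial\Delta$ and from regularity of $\Omega$ for the Dirichlet problem, which gives $P_\Omega(\widetilde{\varphi_m})=0$ on $\partial\mathbb{D}$. Subharmonicity is clear away from $\partial\Delta$; at $z_0\in\partial\Delta$ I apply the sub-mean-value criterion. The key ingredient is $\varphi_m\le v_m$ on $\mathbb{D}$, itself a maximum-principle consequence: $\varphi_m\le\widetilde{\varphi_m}$ on $\partial\Omega$ (equality on $\partial\Delta$; $\varphi_m\le 0=\widetilde{\varphi_m}$ on $\partial\mathbb{D}$), and $\varphi_m$ is subharmonic on $\Omega$, hence $\varphi_m\le P_\Omega(\widetilde{\varphi_m})$ on $\Omega$. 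Then, for all small $r>0$,
\begin{equation*}
v_m(z_0)=\varphi_m(z_0)\le \frac{1}{2\pi}\int_0^{2\pi}\varphi_m(z_0+re^{i\theta})\,d\theta\le \frac{1}{2\pi}\int_0^{2\pi}v_m(z_0+re^{i\theta})\,d\theta.
\end{equation*}
Nonpositivity of $v_m$ also follows from the maximum principle (boundary data on $\partial\Omega$ are $\le 0$), and the exhaustion property from continuity to $0$ on $\partial\mathbb{D}$.

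The inequality chains are then straightforward. $\varphi_m\le G_{\mathbb{D}}\varphi_m$ restates $P_{\mathbb{D}}\varphi_m\le 0$; $G_{\mathbb{D}}\varphi_m\le G_{\mathbb{D}}\sigma_m$ restates $G_{\mathbb{D}}\rho_m\le 0$; and $\varphi_m\le v_m$ was established above. For $v_m\le G_{\mathbb{D}}\sigma_m$: on $\ovr{\Delta}$ this reads $\varphi_m\le G_{\mathbb{D}}\sigma_m$, already contained in the first chain; on $\Omega$ the difference $v_m-G_{\mathbb{D}}\sigma_m$ is harmonic (because $\supp \sigma_m\sbs\ovr{\Delta}$), with boundary values $0$ on $\partial\mathbb{D}$ and $\varphi_m-G_{\mathbb{D}}\sigma_m\le 0$ on $\partial\Delta$, so the maximum principle closes the argument. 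The main obstacle is the behaviour of $G_{\mathbb{D}}\sigma_m$ near $\partial\mathbb{D}$ in the regime $m\le 1/2$, where $\sigma_m$ has infinite total mass; this is exactly what the sandwich $G_{\mathbb{D}}\varphi_m\le G_{\mathbb{D}}\sigma_m\le 0$ resolves, reducing the question to the already-known continuous vanishing of $G_{\mathbb{D}}\varphi_m$ on the boundary.
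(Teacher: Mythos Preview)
Your proof is correct and considerably more detailed than the paper's, which dismisses all but one inequality as ``clear.'' The one substantive difference is in the derivation of $v_m\le G_{\mathbb{D}}\sigma_m$. The paper argues globally: since $\Delta v_m\ge\sigma_m$ as measures on all of $\mathbb{D}$ (equality on $\Delta$ and on $\Omega$, and a nonnegative singular contribution on $\partial\Delta$ coming from the subharmonicity of $v_m$), the difference $v_m-G_{\mathbb{D}}\sigma_m$ is subharmonic on $\mathbb{D}$ with zero boundary values, hence $\le 0$. You instead split into $\ovr{\Delta}$ and $\Omega$: on $\ovr{\Delta}$ you invoke the first chain, and on $\Omega$ you use that the difference is harmonic there with nonpositive boundary data. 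Your route avoids distributional Laplacians across $\partial\Delta$ and is arguably cleaner at that interface, while the paper's route is a one-line global maximum-principle argument once $\Delta v_m\ge\sigma_m$ is accepted. Your handling of the continuity and boundary vanishing of $G_{\mathbb{D}}\sigma_m$ via the sandwich $G_{\mathbb{D}}\varphi_m\le G_{\mathbb{D}}\sigma_m\le 0$ and the $L^\infty_{\mathrm{loc}}$ density is a genuine addition; the paper asserts $G_{\mathbb{D}}\sigma_m=0$ on $\partial\mathbb{D}$ without comment, which is not entirely obvious when $m\le 1/2$ and $\sigma_m$ has infinite mass.
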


\begin{proof} $\Delta v_m\geq \sigma_m$ in
$\mathbb{D}$. Hence $v_m-G_{\mathbb{D}}\sigma _{m}$ is subharmonic
in $\mathbb{D}$. Also $v_m=G_{\mathbb{D}}\sigma _{m}=0$ in $\partial
\mathbb{D}$. Thus $v_m\leq G_{\mathbb{D}}\sigma _{m}$. The other
inequalities are clear.
\end{proof}

From Proposition \ref{P:OrderHardySpaces} and Proposition
\ref{Pr:IneqExhFunc} we get the following.

\begin{Corollary} Let $u_m=G_{\mathbb{D}}\sigma _{m}$. Then $H_{v_{m}}^{p}\subset
H_{u_m}^{p}$.
\end{Corollary}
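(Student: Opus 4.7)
The plan is to observe that this corollary follows by combining the two propositions whose names appear in the statement, with essentially no new work. Both $v_m$ and $u_m := G_{\mathbb{D}}\sigma_m$ have already been shown in Proposition \ref{Pr:IneqExhFunc} to be continuous subharmonic exhaustions of $\mathbb{D}$, so the first hypothesis of Proposition \ref{P:OrderHardySpaces} is free.

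Next I would record the relevant pointwise inequality. Proposition \ref{Pr:IneqExhFunc} gives $v_m(z) \leq G_{\mathbb{D}}\sigma_m(z) = u_m(z)$ for every $z \in \mathbb{D}$. Both functions are nonpositive (one as a Green integral of the nonnegative measure $\sigma_m$, the other as the pasting together of $\varphi_m \leq 0$ on $\overline{\Delta}$ with the Poisson extension of boundary data that is zero on $\partial\mathbb{D}$ and equal to $\varphi_m \leq 0$ on $\partial\Delta$). Hence the inequality $b\, v_m(z) \leq u_m(z)$ holds on all of $\mathbb{D}$ with $b = 1$, so the hypothesis of Proposition \ref{P:OrderHardySpaces} is satisfied with $b = 1$ and (say) $K = \emptyset$.

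Applying Proposition \ref{P:OrderHardySpaces} then yields $sh_{v_m} \subset sh_{u_m}$. Since by definition a holomorphic $f$ lies in $H^p_w$ exactly when $|f|^p \in sh_w$, for any $f \in H^p_{v_m}$ we have $|f|^p \in sh_{v_m} \subset sh_{u_m}$, and therefore $f \in H^p_{u_m}$. This gives the desired inclusion $H^p_{v_m} \subset H^p_{u_m}$ (and, as a byproduct of the same Proposition, the norm bound $\|f\|_{p,u_m} \leq \|f\|_{p,v_m}$, though this is not asked for). There is really no obstacle here; the only item that deserves a moment of care is the verification that $v_m \leq u_m$ pointwise and that the common sign convention lets us take $b = 1$, both of which are already packaged inside Proposition \ref{Pr:IneqExhFunc}.
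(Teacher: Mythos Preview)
Your argument is correct and is exactly the route the paper indicates: the corollary is stated immediately after the sentence ``From Proposition \ref{P:OrderHardySpaces} and Proposition \ref{Pr:IneqExhFunc} we get the following,'' with no further proof given. Your only addition is spelling out that the pointwise inequality $v_m\le u_m$ from Proposition \ref{Pr:IneqExhFunc} allows one to take $b=1$ (and $K=\emptyset$) in Proposition \ref{P:OrderHardySpaces}, which is precisely the intended reading.
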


As a result of Proposition \ref{Pr:IneqExhFunc} we observe that
\[\overline{B}_{c,G_{\mathbb{D}}\sigma _{m}}\cap\overline{\Delta}
\sbs\{z=x+iy\in \overline{%
\Delta }:0\leq x\leq 1-(-c)^{1/m},\,y^{2}\leq x(1-x)\}.
\]

In the next example we construct an exhaustion function $u$ in the
unit disk so that $H^1_u$ is not trivial and is a proper subset of
$H^1$.

\begin{Example}\label{Ex:HardyInfiniteMass}
Let $0<m\leq 1/2$, $p>0$, and $u=u_m=G_{\mathbb{D}}\sigma _{m}$ in
Example \ref {Ex: ExhFuncDefn}. Let $K_{c,u}:=\overline{B}_{c,u}\cap
\overline{\Delta }$. We know that $\int \Delta u=\infty$. Hence,
$1\not\in H_{u}^{p}(\mathbb{D})$. \
\newline
\newline
Now let us show that any function $f\in $hol$(\mathbb{D})$ with $\Delta
(|f|^p)\leq M$ on $\mathbb{D}$ and $|f|^p\leq (1-x)^{1/2}$ on $\Delta $ belongs
to $H_{u}^{p}(\mathbb{D})$. In fact, from (\ref{Eq:DemJenLel}) one can show
that
\begin{align*}
\mu _{c,u}(|f|^p)& \leq \int_{K_{c,u}}|f|^p\Delta u+C\leq 2m(1-m)%
\int_{0}^{1}x^{1/2}(1-x)^{m-1}\,dx+C \\
& \leq 2m(1-m)\int_{0}^{1}(1-x)^{m-1}\,dx+C=2(1-m)+C,
\end{align*}%
where $C=-\int_{\mathbb{D}}u\Delta (|f|^p)\geq 0$ is a number
independent of $c$. Thus, $f\in H_{u}^{p}(\mathbb{D})$.\newline
\newline
To finish the example, let us take $f(z)=[a(1-z)]^{2/p}$, where we
take the principal branch of the logarithm and $|a|\leq 1/2$.
Clearly $f$ and $\Delta (|f|^p)$ are bounded on $\mathbb{D}$ and one
can see
that $|f|^p\leq (1-x)^{1/2}$ on $\Delta $. Therefore, $\{0\}\not=H_{u}^{p}(%
\mathbb{D})\subsetneq H^{p}(\mathbb{D})$.
\end{Example}

The next example is similar to the one above. In this case $u$ has finite Laplace mass.
\begin{Example}\label{Ex:HardyFiniteMass}
 Now let $1/2<m<1$ with $u=u_m=G_{\mathbb{D}}\sigma _{m}$ in Example \ref
{Ex: ExhFuncDefn}. It follows from the same discussion in Example \ref{Ex:HardyInfiniteMass}
that the function $f(z)=(1-z)^{1/p}$ belongs to $H^p_u$. Take $m-1/2<t<1/2$ and set
 \begin{eqnarray}
  g(z):=\frac{1}{(1-z)^{2t/p}}.
 \end{eqnarray}
 We will show that the function $g$ belongs to $H^p$, but not in $H^p_u$.
 Indeed, we have
 \begin{eqnarray*}
  |g(re^{i\theta})|^p=\frac{1}{(\sin^2\theta+(\cos\theta -r)^2)^t}\leq \sin^{-2t}\theta.
 \end{eqnarray*}
Now $\int_0^{2\pi}\sin^{-2t}\theta d\theta<\infty$ exactly when $t<1/2$. Hence $g\in H^p$.
Considering the $H^p_u$ norm
\begin{eqnarray}
 \nonumber \|g\|_{u,p} &\geq & \int _{\Delta}|g|^p \Delta u=
 \int_0^1\int_{-\sqrt{x(1-x)}}^{\sqrt{x(1-x)}}\frac{(1-x)^{m-2}}{[(1-x)^2+y^2]^t}dydx\\
 \label{Eq:NormEstFiniteMass} &\geq & 2^{-t+1}\int_0^1 \sqrt{x}(1-x)^{m-3/2-t}dx.
\end{eqnarray}
The integral in \eqref{Eq:NormEstFiniteMass} is infinite when $t>m-1/2$. Therefore
$g$ does not belong to $H^p_u$. Again $\{0\}\not=H_{u}^{p}(%
\mathbb{D})\subsetneq H^{p}(\mathbb{D})$.
\end{Example}


\section{Further remarks}
\par 1. Theorem \ref{T:CharacHardyClassesBoundaryVal} together with Chern-Nirenberg inequality
shows that if $\partial G$ has
$C^2$ boundary and $\Delta u\in L^{\infty}(G)$,
then $H^p_u=H^p_{\lm}$, the Stein-Hardy space. \\
\par 2. If $u$ is radial, that is to say, if $u(z)=u(|z|)$ on the unit disk, then there are two possibilities:
Either $H^p_u=\{0\}$ which is the case when $\int \Delta u=\infty$, or $H^p_u=H^p$.
Note that if $u$ is radial, then so is the measure $\Delta u$ and
\[V(\zeta)=\int_{\mathbb{D}}P_{\mathbb{D}}(z,\zeta)\Delta u(z)=\int_0^1r\Delta u(r)dr,\]
which is either equal to $\infty$ for every $\zeta\in\partial \mathbb{D}$ or it is constant.\\




\begin{thebibliography}{PolSig11}
\bibitem[1]{Al03} M. A. Alan,  Hardy spaces on hyperconvex domains, Ankara,
Middle East Technical University , 2003.

\bibitem[2]{EaFefRos} M. B. Eals, C. Fefferman, R. G. Rossman,
Strictly pseudoconvex domains in ${\mathbb{C}}^n$, Bull.
Amer. Math. Soc.(N.S.) 8 (1983), 125--322.

\bibitem[3]{CaKeLa}  L. Capogna, C. E. Kenig, L. Lanzani,
Harmonic Measure. Geometric and Analytic Points of View,
University Lecture Series, 35. American Mathematical Society,
Providence, RI, 2005.

\bibitem[4]{Dem87} J. P. Demailly, Mesure de Monge Amp\`{e}re et mesures plurisousharmonique,
Math. Z. 194 (1987), 519--564.

\bibitem[5]{Duren} P. L. Duren, Theory of $H^p$ spaces,
Pure and Applied Mathematics, Vol. 38 Academic Press, New York-London 1970.

\bibitem[6]{GarMar} J. B. Garnett, D. E. Marshall, Harmonic Measure,  New Mathematical Monographs, 2,
Cambridge University Press, Cambridge, 2005.

\bibitem[7]{Helms} L. L. Helms, Potential Theory,
Universitext, Springer-Verlag, London, 2009.

\bibitem[8]{PolSte08} E. A. Poletsky, M. I. Stessin, Hardy and Bergman spaces on hyperconvex domains
and their composition operators, Indiana Univ. Math. J. 57 (2008),
no. 5, 2153--2201.

\bibitem[9]{Ran} T. Ransford, Potential theory in the complex plane, London Mathematical Society Student Texts,
28 Cambridge University Press, Cambridge, 1995.

\bibitem[10]{Rudin} W. Rudin, Real and Complex Analysis, third ed., McGraw Hill, 1987.

\bibitem[11]{Rudin2} W. Rudin, Function Theory in the Unit Ball of
${\mathbb{C}}^n$,  Springer, 1980.

\bibitem[12]{Rudin3} W. Rudin, Function Theory in the Polydisks,  W.A. Benjamin, 1969.

\bibitem[13]{Ste72} E. M. Stein, Boundary behavior of holomorphic functions of several complex variables,
Mathematical Notes, No. 11. Princeton University Press, Princeton,
NJ; University of Tokyo Press, Tokyo, 1972.

\end{thebibliography}
\end{document}